\definecolor{url}{RGB}{0,113,185} 
\definecolor{link}{RGB}{160,0,0} 
\let\mathbb\mathds
\numberwithin{equation}{section}
\renewcommand*{\d}{\mathrm{d}}
\newcommand*{\R}{\mathbb{R}}
\newcommand*{\tr}{\operatorname{tr}}
\newcommand*{\Ad}{\operatorname{Ad}}
\newcommand*{\ad}{\operatorname{ad}}
\newcommand*{\End}{\operatorname{End}}
\newtheorem{thm}{Theorem}[section]
\newtheorem{lem}[thm]{Lemma}
\newtheorem{prop}[thm]{Proposition}
\theoremstyle{definition}
\theoremstyle{remark}
\newtheorem{rmk}[thm]{Remark}
\title[Einstein metrics on cotangent bundles]{Induced almost para-K\"ahler Einstein\\ metrics on cotangent bundles}
\date{22nd July 2024}
\author[A.~{\v{C}}ap]{Andreas {\v{C}}ap}
\address{A.{\v{C}}. -- Faculty of Mathematics, University of Vienna, Vienna, Austria}
\email{Andreas.Cap@univie.ac.at}
\author[T.~Mettler]{Thomas Mettler}
\address{T.M. -- Faculty of Mathematics and Computer Science, UniDistance Suisse, Brig, Switzerland}
\email{thomas.mettler@fernuni.ch, mettler@math.ch}
\begin{document}

\begin{abstract}
In earlier work we have shown that for certain geometric structures on a smooth manifold $M$ of dimension $n$, one obtains an almost para-K\"ahler--Einstein metric on a manifold $A$ of dimension $2n$ associated to the structure on $M$. The geometry also associates a diffeomorphism between $A$ and $T^*M$ to any torsion-free connection compatible with the geometric structure. Hence we can use this construction to associate to each compatible connection an almost para-K\"ahler--Einstein metric on $T^*M$. In this short article, we discuss the relation of these metrics to Patterson--Walker metrics and derive explicit formulae for them in the cases of projective, conformal and Grassmannian structures.
\end{abstract}

\maketitle

\section{Introduction}

An \emph{almost para-K\"ahler structure} on a $2n$-manifold $N$ consists of a
pseudo-Rie\-mann\-ian metric $h$ of split-signature $(n,n)$ and a symplectic form
$\Omega$ such that the endomorphism $I : TN \to TN$ defined by
$\Omega=h(\cdot,I\cdot)$ satisfies $I^2=\mathrm{Id}_{TN}$. The
  point-wise eigenspaces of $I$ fit together to define smooth rank $n$
  subbundles of $TN$ which are isotropic for $h$ and Lagrangian for
  $\Omega$. Therefore, almost para-K\"ahler structures are sometimes also called
\emph{almost bi-Lagrangian structures}, see for
instance~\cite{MR1824987,MR2193747,MR3916049}. A torsion-free affine
  connection $\nabla$ on a smooth $n$-manifold $M$ canonically determines a
  split-signature metric on the total space of the cotangent bundle $T^*M$ of $M$,
  called the \emph{Patterson-Walker metric} \cite{MR48131}. For this metric, the vertical and
  horizontal subbundle of $TT^*M$ are isotropic.

  Several modified versions of the Patterson-Walker metric have been constructed,
  often motivated by compatibility conditions with projective equivalence of
  connections. For example, in \cite{MR4095182}, the authors use a Fefferman-type
  construction to obtain a modification such that projectively equivalent connections
  lead to conformally related metrics. Based on tools from projective differential
  geometry, a different kind of modification has been obtained in
  \cite{MR3833818}. The advantage of the latter modification is that it always
  produces an Einstein metric on $T^*M$, while Einstein metrics almost never occur in
  the construction of \cite{MR4095182}.

  Our general approach recovers these results from \cite{MR3833818}. For a
  torsion-free linear connection $\nabla$ on $TM$, we denote by $\boldsymbol{\mathsf{P}}^p$ the
  (projective) Schouten tensor, see \cref{sec:projective} for the precise
  definition. Denoting by $\nu:T^*M\to M$ the bundle projection and by
  $\tau\in\Omega^1(T^*M)$ the tautological one-form, we prove:
  \begin{prop}\label{prop:projective}
  Let $h_0$ be the Patterson--Walker metric of $\nabla$. Then
  $$h_0-\nu^*(\operatorname{Sym}\boldsymbol{\mathsf{P}}^p)-\tau\otimes\tau$$ is an Einstein metric on
  $T^*M$, which together with $\Omega=-\d\tau+\nu^*(\operatorname{Alt}\boldsymbol{\mathsf{P}}^p)$ defines
  an almost para-K\"ahler structure. Projectively equivalent connections lead to
  isometric metrics.
  \end{prop}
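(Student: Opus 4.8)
The plan is to deduce the statement from the general construction of our earlier work rather than by a direct curvature computation. Recall that the projective class $[\nabla]$ determines the canonical $2n$-manifold $A$ carrying a canonical almost para-K\"ahler--Einstein structure $(h_A,\Omega_A)$, and that each torsion-free connection $\nabla\in[\nabla]$ determines a diffeomorphism $\Phi_\nabla\colon A\to T^*M$. Setting $h:=(\Phi_\nabla^{-1})^*h_A$ and $\Omega:=(\Phi_\nabla^{-1})^*\Omega_A$, the Einstein and almost para-K\"ahler properties are immediate, since both conditions are preserved under diffeomorphisms and hold for $(h_A,\Omega_A)$ by the general theorem. What remains is therefore twofold: to compute $h$ and $\Omega$ explicitly and match them with the asserted formulae, and to establish the projective invariance up to isometry.

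For the explicit computation I would work in the coordinates $(x^i,p_i)$ on $T^*M$ induced by coordinates $(x^i)$ on $M$, in which $\tau=p_i\,\d x^i$ and the only nonvanishing blocks of the Patterson--Walker metric $h_0$ are the canonical horizontal--vertical pairing and a purely horizontal block assembled from the Christoffel symbols of $\nabla$. Writing $\Phi_\nabla$ out in these coordinates, I would pull back $h_A$ and $\Omega_A$ and read off the three blocks: the mixed block reproduces the canonical pairing (hence the $\d x^i\,\d p_i$ term of $h_0$ and the $-\d\tau$ term of $\Omega$), the vertical--vertical block vanishes, and the horizontal block yields, up to the chosen normalisations, exactly $-\operatorname{Sym}(\mathsf{P}^p)-\tau\otimes\tau$ added to the horizontal part of $h_0$ for $h$, and $\operatorname{Alt}\mathsf{P}^p$ for $\Omega$. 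This matches $h=h_0-\nu^*\operatorname{Sym}(\mathsf{P}^p)-\tau\otimes\tau$ and $\Omega=-\d\tau+\nu^*\operatorname{Alt}\mathsf{P}^p$. The remaining structural claims can also be checked by hand as a consistency test: $h$ has split signature because of the off-diagonal identity block; $\Omega$ is closed because $\operatorname{Alt}\mathsf{P}^p$ is a constant multiple of the curvature of the connection induced by $\nabla$ on the bundle of top-degree forms, hence a closed two-form on $M$; $\Omega$ is nondegenerate since $-\d\tau$ is and the correction is a pullback from $M$; and $I^2=\mathrm{Id}$ follows from a short computation with the above block matrices.

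For the projective invariance, let $\hat\nabla\in[\nabla]$ be a second connection, so that $\hat\nabla_XY=\nabla_XY+\Upsilon(X)Y+\Upsilon(Y)X$ for a one-form $\Upsilon$. Since $(h_A,\Omega_A)$ depends only on the projective class, we have $h=(\Phi_\nabla^{-1})^*h_A$ and $\hat h=(\Phi_{\hat\nabla}^{-1})^*h_A$ for the same $h_A$; consequently $\psi:=\Phi_{\hat\nabla}\circ\Phi_\nabla^{-1}\colon T^*M\to T^*M$ satisfies $\psi^*\hat h=h$ and $\psi^*\hat\Omega=\Omega$, i.e. the two induced metrics are isometric, in fact via a symplectomorphism. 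Unwinding the definitions, $\psi$ is the fibre-affine map of $T^*M$ determined by $\Upsilon$; alternatively one may verify the invariance directly by substituting the projective transformation rules for the Christoffel symbols and for $\mathsf{P}^p$ into the explicit formula and checking that $h$ transforms by this diffeomorphism.

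The main obstacle I anticipate is the explicit identification step --- pinning down $\Phi_\nabla$ precisely enough to extract the horizontal block, in particular to produce the quadratic term $-\tau\otimes\tau$ and the Schouten correction $-\nu^*\operatorname{Sym}(\mathsf{P}^p)$ with the correct coefficients. If one instead foregoes the general construction and attacks the Einstein condition directly, the bottleneck shifts to computing the Ricci tensor of $h$ from its Christoffel symbols, where the cancellations rely essentially on the Bianchi identities satisfied by $\mathsf{P}^p$; the push-forward route is preferable precisely because it reduces the Einstein and almost para-K\"ahler assertions to the general theorem and leaves only this bookkeeping.
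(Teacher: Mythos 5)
Your overall strategy coincides with the paper's: the Einstein and almost para-K\"ahler properties are indeed imported wholesale from the general theorem of \cite{arXiv:1908.10325}, and the projective invariance follows exactly as you say, because for two connections in the class the two metrics on $T^*M$ are pullbacks of the \emph{same} structure $(h_A,\Omega_A)$ along the two diffeomorphisms, so the composite $\psi$ is automatically an isometry (and a symplectomorphism). That part of your proposal is correct and is precisely how the paper argues.

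The gap is in the step you yourself flag as the obstacle: the explicit identification of the pullback. You propose to ``write $\Phi_\nabla$ out in coordinates'' and read off blocks, but $\Phi_\nabla$ is defined through the Cartan bundle $\mathcal{G}$ and the Weyl structure $\sigma$, and no coordinate formula for it is available until one has computed how the Cartan connection pulls back under $([u],Z)\mapsto\sigma([u])\cdot\exp(Z)$; asserting that the horizontal block comes out ``exactly $-\operatorname{Sym}(\mathsf{P}^p)-\tau\otimes\tau$'' is at this point circular, since that is the very content to be proved. The paper fills this in structurally: \cref{prop:isomweylstructure} gives $(\Phi_\sigma)^*\theta=\d\mathcal{Z}+\sigma^*\theta+[\sigma^*\theta,\mathcal{Z}]+\tfrac12[[\sigma^*\theta,\mathcal{Z}],\mathcal{Z}]$, whose $\mathfrak{g}_1$-component yields the universal formula of \cref{thm:main}, namely $(\varphi_\sigma)^*h=h_{\vartheta_\sigma}-\nu^*\operatorname{Sym}(\boldsymbol{\mathsf{P}}^{\sigma})+\boldsymbol{q}$ with $\boldsymbol{q}$ given by the algebraic expression \eqref{q-formula}; the term $-\tau\otimes\tau$ is then not read off from any coordinate expression for $\Phi_\nabla$ but computed from the triple bracket \eqref{bracket-formula} in $\mathfrak{sl}(n+1,\R)$, which gives $\{\xi,\alpha\}(\alpha)=-2\alpha(\xi)\alpha$. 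A second point your proposal omits entirely: the general theorem produces the abstract Rho tensor of the Weyl structure, not the Schouten tensor $\mathsf{P}^p$ defined via $\operatorname{Ric}(\nabla)$; identifying the two requires the normalization condition (\cref{sec:Schouten}), which in the projective case yields $\operatorname{Ric}(\nabla)(X,Y)=n\boldsymbol{\mathsf{P}}^\sigma(X,Y)-\boldsymbol{\mathsf{P}}^\sigma(Y,X)$ and hence $\boldsymbol{\mathsf{P}}^{\sigma}=\tfrac{1}{n-1}\operatorname{Sym}\operatorname{Ric}(\nabla)+\tfrac{1}{n+1}\operatorname{Alt}\operatorname{Ric}(\nabla)$. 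Without these two computations the proposition is not proved; with them supplied, your reduction becomes exactly the paper's proof.
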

  Moreover, we construct different Einstein modifications of the Patterson Walker
  metric for connections that are compatible with certain geometric structures, the
  best known example of which are conformal structures. So suppose that $g$ is a
  pseudo-Riemannian metric on $M$ and let $\nabla$ be a Weyl connection for $g$ so that $\nabla g=\beta\otimes g$ for some
  $\beta\in\Omega^1(M)$. Then we let $\boldsymbol{\mathsf{P}}^c$ be the conformal Schouten tensor of
  $\nabla$, see \cref{sec:conformal} for the precise definition. The inverse
  metric $g^\#$ to $g$ defines a smooth function $|\cdot|^2_{g^\#}$ on $T^*M$ and we
  prove:
  \begin{prop}\label{prop:conformal}
  Let $h_0$ be the Patterson--Walker metric of a Weyl connection $\nabla$ for $g$. Then
  $$
  h_0-\nu^*(\operatorname{Sym}\boldsymbol{\mathsf{P}}^c)-\tau\otimes\tau+\tfrac12|\cdot|^2_{g^\#}\nu^* g
  $$ is an Einstein metric on $T^*M$, which together with
  $\Omega=-\d\tau+\nu^*(\operatorname{Alt}\boldsymbol{\mathsf{P}}^c)$ defines an almost para-K\"ahler
  structure. Different Weyl connections for $g$ lead to isometric metrics.
  \end{prop}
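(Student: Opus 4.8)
The plan is to deduce the statement from the general construction of our earlier work, in complete parallel to \cref{prop:projective}, rather than by a direct curvature calculation. A conformal structure $[g]$ on $M$ is one of the geometric structures to which that construction applies, so it canonically determines an almost para-K\"ahler--Einstein manifold $(A,h_A,\Omega_A)$ of dimension $2n$, and any torsion-free connection compatible with $[g]$ --- that is, any Weyl connection $\nabla$ for $g$ --- produces a diffeomorphism $\Phi_\nabla\colon A\to T^*M$. Since $(A,h_A,\Omega_A)$ is almost para-K\"ahler--Einstein by construction, so is the pair $\big((\Phi_\nabla^{-1})^*h_A,(\Phi_\nabla^{-1})^*\Omega_A\big)$ on $T^*M$, and the Einstein as well as the almost para-K\"ahler properties come for free. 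The entire content of the proposition is therefore the \emph{explicit identification} of these two pulled-back tensors with the stated formulae, together with the isometry assertion.

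For the identification I would unwind $\Phi_\nabla$ in an adapted frame. The Weyl connection splits $TT^*M$ into horizontal and vertical parts; in the induced coordinates $(x^i,p_i)$, with $Dp_i=\d p_i-\Gamma^k_{ij}p_k\,\d x^j$ the covariant differential, the Killing-form pairing of the $\mathfrak{g}_{-1}$- and $\mathfrak{g}_1$-parts of the Cartan connection reproduces the Patterson--Walker term $h_0=2\,\d x^i\odot Dp_i$, while the soldering form yields $\tau=p_i\,\d x^i$ and $-\d\tau=\d x^i\wedge Dp_i$. The Rho (Schouten) tensor built into the connection contributes $\nu^*\mathsf{P}^c$, its symmetric part entering $h$ and its alternating part entering $\Omega$. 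The genuinely new feature is the quadratic dependence on the fibre coordinate: realising a covector $p\in\mathfrak{g}_1$ through the Weyl structure brings in $\Ad(\exp p)=\mathrm{id}+\ad(p)+\tfrac12\ad(p)^2$, and since the geometry is $|1|$-graded the term $\tfrac12\ad(p)^2$ maps $\mathfrak{g}_{-1}\to\mathfrak{g}_1$ quadratically in $p$. Pairing it back through the Killing form is exactly what produces the quadratic fibre terms of $h$. For $\mathfrak{sl}(n+1)$ this yields only $-\tau\otimes\tau$, as in \cref{prop:projective}; for $\mathfrak{so}(p+1,q+1)$ the double bracket $\tfrac12[p,[p,\,\cdot\,]]$ carries an additional trace part governed by the conformal metric on $\mathfrak{g}_{-1}$, and this is precisely what upgrades $-\tau\otimes\tau$ to $-\tau\otimes\tau+\tfrac12|\cdot|^2_{g^\#}\,\nu^*g$. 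Extracting this trace part and checking that its coefficient is exactly $\tfrac12$ --- the shadow of the $\tfrac12$ in $\tfrac12\ad(p)^2$ --- is the main obstacle; its $\mathfrak{sl}$-degeneration is at the same time the reason the two propositions differ by nothing but the $g^\#$-term.

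Finally, different Weyl connections for the fixed $g$ are all compatible with one and the same conformal structure $[g]$, so they share the single source $(A,h_A,\Omega_A)$ and merely supply different diffeomorphisms $\Phi_\nabla,\Phi_{\hat\nabla}\colon A\to T^*M$. The composite $\Phi_{\hat\nabla}\circ\Phi_\nabla^{-1}\colon T^*M\to T^*M$ intertwines the two induced metrics and symplectic forms by construction, hence is an isometry of almost para-K\"ahler structures, which settles the last assertion at once. If desired this transition map can be made explicit: passing from $\nabla$ to $\hat\nabla$ shifts the Weyl one-form by some $\Upsilon\in\Omega^1(M)$, and --- as the coordinate form of $h_0$ already forces a fibre-linear discrepancy that no pull-back from $M$ can absorb --- I expect the transition to be a translation in the fibres of $T^*M$ by (a multiple of) $\Upsilon$; one then checks directly that this translation, combined with the induced change of $\mathsf{P}^c$, leaves the stated metric invariant, recovering the abstract isometry by hand.
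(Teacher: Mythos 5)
Your proposal is correct and takes essentially the same route as the paper: the Einstein and almost para-K\"ahler properties are imported from the general construction on $A$, the explicit formula comes from expanding $\Ad(\exp Z)=\mathrm{id}+\ad(Z)+\tfrac12\ad(Z)^2$ along the Weyl-structure trivialization of $\mathcal{G}$ (the paper's \cref{prop:isomweylstructure} and \cref{thm:main}), the $g^\#$-term is exactly the trace part of the double bracket in $\mathfrak{o}(n+1,1)$, and the isometry assertion holds because all Weyl connections pull back the single metric $h_A$ through different diffeomorphisms. The two computations you defer --- the triple bracket $[[x,z],w]=-z(x)w-w(x)z+(w^t\cdot z^t)x^t$, which yields $\boldsymbol{q}=-\tau\otimes\tau+\tfrac12|\cdot|^2_{g^\#}\nu^*g$ with precisely the predicted coefficient $\tfrac12$ inherited from $\tfrac12\ad(Z)^2$, and the identification of the Rho tensor with the classical conformal Schouten tensor via the vanishing Ricci-type contraction of $\mathbf{R}^\sigma-\partial\boldsymbol{\mathsf{P}}^\sigma$ --- are exactly what Section~\ref{sec:conformal} of the paper carries out.
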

  There is also a version for torsion-free connections compatible with a Grassmannian
  structure, see \cref{sec:Grassmannian}, which requires a more subtle
  modification of the tautological one-form $\tau$.

 Conformal and (almost) Grassmannian structures are examples of a family
  of very specific G-structures, for which the structure group $G_0$ is obtained from
  a so-called $|1|$-grading of a simple Lie algebra. A fundamental result on these
  structures is that they admit an equivalent description via Cartan geometries whose
  homogeneous model is a generalized flag manifold. On the level of Cartan
  geometries, projective structures fit into this framework as well. In this case, the
  underlying $G_0$-structure is the full frame bundle and the extension to a Cartan
  geometry is equivalent to the choice of a projective equivalence class of
  connections.

In~\cite{MR4604420}, we showed how to canonically associate to a torsion-free
geometry of the above types on an $n$-manifold $M$ an almost para-K\"ahler-Einstein structure
$(h_A,\Omega_A)$ on the total space of a natural affine bundle $\mu:A\to M$ such that
$\dim(A)=2n$. The modelling vector bundle for $A$ is the
  cotangent bundle $\nu : T^*M \to M$ and sections of $\mu:A\to M$ can be interpreted
  as the \emph{Weyl structures} of the parabolic geometry. Any Weyl structure comes
  with a \emph{Weyl connection}, a principal connection on the $G_0$-structure, which
  is uniquely determined by the induced linear connection $\nabla$ on $TM$. For
  projective structures, the Weyl connections are exactly the connections in the
  projective class; in the other cases they are exactly the torsion-free connections
  compatible with the $G_0$-structure. In particular, in the conformal case Weyl connections agree with the usual notion of a Weyl connection as mentioned above. A Weyl connection also determines a \emph{Rho-tensor}
  $\boldsymbol{\mathsf{P}}$, a $\binom02$-tensor field on $M$, which is an analogue of the Ricci
  curvature, and specializes to the Schouten tensors for projective and conformal
  structures.  In \cite{MR4604420} it is also shown that the choice of a Weyl
structure induces a diffeomorphism $\varphi: T^*M \to A$ satisfying
\begin{equation}\label{eqn:Omega}
\varphi^*\Omega_A=-\d\tau+\nu^*(\operatorname{Alt}\boldsymbol{\mathsf{P}}),
\end{equation}
where $\tau$ as before denotes the tautological $1$-form of $T^*M$. 

The purpose of this note is to relate the metric $\varphi^*h_A$ to the Patterson--Walker metric of the Weyl connection $\nabla$. In particular, we show -- see \cref{thm:main}
below -- that $\varphi^*h_A$ can be written in a uniform way as
\begin{equation}\label{eqn:universalstructure} 
\varphi^*h_A=h_0-\nu^*(\operatorname{Sym}\boldsymbol{\mathsf{P}})+\boldsymbol{q},
\end{equation}
where $h_0$ is the Patterson--Walker metric for $\nabla$. Further, $\boldsymbol{q}$
is a symmetric covariant $2$-tensor field on $T^*M$ which only depends on the
underlying geometric structure and which is semibasic for the projection $\nu : T^*M
\to M$. Moreover, $\boldsymbol{q}$ is homogeneous of degree $2$ in the fibres of
$T^*M$, that is, satisfies $(\mathcal{S}_{t})^*\boldsymbol{q}=t^2\boldsymbol{q}$
where $\mathcal{S}_t : T^*M \to T^*M$ denotes scaling of a cotangent vector by the
factor $t \in \R\setminus\{0\}$. Hence for any Weyl connection $\nabla$,
  \eqref{eqn:universalstructure} defines an Einstein metric on $T^*M$ and
  \eqref{eqn:Omega} makes this into a para-K\"ahler structure.

  From this general result, \cref{prop:projective}, \cref{prop:conformal} and
  their analogue for Grassmannian structures follow from explicitly describing the
  ingredients in \eqref{eqn:universalstructure} for the structures in question. This
  is done in \cref{sec:examples}.

\subsection*{Acknowledgements} A.\v C.\ acknowledges support by the Austrian
Science Fund (FWF): 10.55776 / P33559. T.M. is partially supported by the DFG priority
programme ``Geometry at infinity'' SPP 2026. This article is based upon
  work from COST Action CaLISTA CA21109 supported by COST (European Cooperation in
  Science and Technology). https://www.cost.eu. The authors are grateful to M.~Dunajski for his helpful correspondence and to the anonymous referee for a careful report and valuable remarks.

\section{Preliminaries}

We start by collecting some basic facts about soldering forms, Patterson--Walker metrics, $|1|$-graded parabolic geometries, Weyl structures and the Rho tensor. Throughout this article all manifolds and mappings are assumed to be smooth, that is, $C^{\infty}$.

\subsection{Soldering forms}\label{sect:soldering}

Let $M$ be an $n$-manifold and $V$ a real $n$-dimensional vector space. We consider a
right principal $G_0$-bundle $\pi_0 : \mathcal{G}_0 \to M$ for some Lie group $G_0$
and a representation $\rho : G_0 \to \mathrm{GL}(V)$ of $G_0$ on the vector space
$V$. Suppose that we have a $1$-form $\omega \in \Omega^1(\mathcal{G}_0,V)$ on
$\mathcal{G}_0$ with values in $V$ which is semibasic and $\rho$-equivariant. The
former condition means that $\omega$ vanishes on all vectors of $T\mathcal{G}_0$ that
are tangent to the fibres of $\pi_0$ and the latter condition means that $\omega$
satisfies $R_a^*\omega=\rho(a^{-1})(\omega)$ for all $a \in G_0$, where $R_a :
\mathcal{G}_0 \to \mathcal{G}_0$ denotes the right action by $a$. Recall that such a
form defines a $1$-form $\boldsymbol{\omega} \in \Omega^1(M,E)$ on $M$ with values in
the vector bundle $\nu : \mathcal{G}_0 \times_{\rho}V=:E \to M$ associated to
$\rho$. Indeed, for all $v \in TM$ the element $\boldsymbol{\omega}(v) \in E$ is
represented by $(u,\omega(\tilde{v})) \in \mathcal{G}_0\times V$ for any choice of $u
\in \mathcal{G}_0$ having the same basepoint as $v$ and any $\tilde{v} \in
T_u\mathcal{G}_0$ such that $(\pi_0)^{\prime}_u(\tilde{v})=v$, where $(\pi_0)^{\prime}_u :
T_u \mathcal{G}_0 \to T_{\pi(u)}M$ denotes the derivative of $\pi_0$ at $u$. The
$1$-form $\omega \in \Omega^1(\mathcal{G}_0,V)$ is called a \emph{soldering form} if
$\boldsymbol{\omega}$ -- thought of as a map $TM \to E$ -- is a vector bundle
isomorphism. Recall that on the one hand this is equivalent to the fact that $\omega$
is \textit{strictly horizontal} in the sense that its kernel in any point of
$\mathcal{G}_0$ is the vertical subbundle. On the other hand,
  $\boldsymbol{\omega}$ induces a homomorphism from $\mathcal{G}_0$ to the linear
  frame bundle of $M$, whence this bundle defines a (first order) $G_0$-structure on
  $M$.

\subsection{The Patterson--Walker metric}\label{sec:Patterson}

We review the construction of the Patterson--Walker metric, adapted to our setting. To this end suppose, as above, that $\pi : \mathcal{G}_0 \to M$ is a right principal $G_0$-bundle equipped with a soldering form $\omega \in \Omega^1(\mathcal{G}_0,V)$ which is equivariant with respect to a representation $\rho : G_0 \to \mathrm{GL}(V)$. Assume in addition that $\rho$ is infinitesimally effective, that is, the induced Lie algebra representation $\varrho : \mathfrak{g}_0 \to \mathfrak{gl}(V)$ is injective. Using $\omega$, the tangent bundle of $M$ can be identified with the bundle associated to $\rho$, that is, $TM\simeq \mathcal{G}_0\times_{\rho} V$. 

Now let $\vartheta \in \Omega^1(\mathcal{G}_0,\mathfrak{g}_0)$ be a principal $G_0$-connection on $\mathcal{G}_0$. Let us denote by $V^*$ the dual space to $V$ and by $\varrho^* : \mathfrak{g}_0 \to \mathfrak{gl}(V^*)$ the the dual representation to $\varrho$.  Then on the product $\mathcal{G}_0 \times V^*$ we consider the $V^*$-valued $1$-form 
\[
\zeta_{\vartheta}=\d \xi+(\varrho^* \circ \vartheta)(\xi),
\] 
where $\xi : \mathcal{G}_0 \times V^* \to V^*$ denotes the projection onto the second factor. By construction, the $V^*$-valued $1$-form is equivariant with respect to the dual representation $\rho^* : G_0 \to \mathrm{GL}(V^*)$ and it is easy to check that it is semibasic for the projection $\mathcal{G}_0 \times V^* \to T^*M\simeq \mathcal{G}_0\times_{\rho^*} V^*$. Consequently, it represents a $1$-form $\boldsymbol{\zeta}_{\vartheta} \in \Omega^1(T^*M,\nu^*T^*M)$ on $T^*M$ with values in the pullback of the cotangent bundle $\nu : T^*M \to M$ of $M$. The pullback bundle $\nu^*T^*M$ is naturally a subbundle of $T^*(T^*M)$ and hence we may interpret $\boldsymbol{\zeta}_{\vartheta}$ as a $1$-form on $T^*M$ with values in $T^*(T^*M)$, or equivalently, as a section of $T^*(T^*M)\otimes T^*(T^*M)$. The symmetric part $h_{\vartheta}=\operatorname{Sym}\boldsymbol{\zeta}_{\vartheta}$ is a pseudo-Riemannian metric of split-signature $(n,n)$ known as the \emph{Patterson--Walker metric} associated to the connection $\vartheta$, see \cite{MR48131} for the original construction and \cite{MR3830122,MR4095182} for recent applications.

\subsection{Parabolic geometries}\label{sec:parabol}

Recall that a Cartan geometry of type $(G,P)$ for some Lie group $G$ and closed subgroup $P\subset G$ is a pair $(\pi : \mathcal{G} \to M,\theta)$ consisting of a right principal $P$-bundle $\pi : \mathcal{G} \to M$ together with a \emph{Cartan connection} $\theta$ taking values in the Lie algebra $\mathfrak{g}$ of $G$. We let $R : \mathcal{G}\times P \to \mathcal{G}$ denote the right action of $P$ and we write $R_g=R(\cdot,g)$ for all $g \in P$ and $\iota_u=R(u,\cdot)$ for all $u \in \mathcal{G}$. The $1$-form $\theta \in \Omega^1(\mathcal{G},\mathfrak{g})$ being a \emph{Cartan connection} means that for all $u \in \mathcal{G}$ the linear map $\theta_u : T_u\mathcal{G} \to \mathfrak{g}$ is an isomorphism and moreover
\begin{equation}\label{eqn:pullbackrightaction}
(R^*\theta)_{(u,g)}=(\iota_u^*\theta)_g+(R_g^*\theta)_u=(\Upsilon_P)_g+\operatorname{Ad}(g^{-1})\circ\theta_u,
\end{equation}
for all $(u,g) \in \mathcal{G}\times P$, where $\Upsilon_P$ denotes the Maurer--Cartan form of $P$ and $\Ad : G \to \mathrm{GL}(\mathfrak{g})$ the adjoint action of $G$. In addition, the curvature $2$-form $\Theta=\d \theta+\frac{1}{2}[\theta,\theta]$ of $\theta$ satisfies a further condition called \emph{normality}. The details of these conditions are not important for our purposes; however some consequences will be discussed below. We will always assume that the Cartan geometry is \emph{torsion-free}, that is, $\Theta$ has values in $\mathfrak{p}\subset\mathfrak{g}$, the Lie algebra of $P$. 

Here we consider the special case of $|1|$-graded parabolic geometries. This means that $G$ is assumed to be semisimple and that its Lie algebra $\mathfrak{g}$ is endowed with a so-called $|1|$-grading. This is a decomposition
\[
\mathfrak{g}=\mathfrak{g}_{-1}\oplus \mathfrak{g}_0\oplus \mathfrak{g}_1
\]
into a direct sum of linear subspaces $\mathfrak{g}_{i}$ for $i=-1,0,1$ such that $[\mathfrak{g}_i,\mathfrak{g}_j]\subset \mathfrak{g}_{i+j}$ with the convention that $\mathfrak{g}_{\ell}=\{0\}$ for $\ell =\pm 2$. Furthermore, no simple ideal of $\mathfrak{g}$ is allowed to be contained in $\mathfrak{g}_0$ and the Lie algebra $\mathfrak{p}$ of $P$ satisfies $\mathfrak{p}=\mathfrak{g}_0\oplus \mathfrak{g}_1$. In particular, this implies that $P\subset G$ is a parabolic subgroup in the sense of representation theory.

These properties have some nice consequences. First, the exponential map of $\mathfrak{g}$ -- restricted to $\mathfrak{g}_1$ -- is a diffeomorphism from $\mathfrak{g}_1$ onto a closed normal subgroup $P_+\subset P$. Second, defining $G_0\subset P$ to consist of those elements $g$ so that the adjoint action $\Ad(g) \in \mathrm{GL}(\mathfrak{g})$ preserves the grading of $\mathfrak{g}$, one can show that the Lie algebra of $G_0$ is $\mathfrak{g}_0$ and that $G_0$ is isomorphic to the quotient $P/P_+$. Third, every element $g$ of $P$ can be written as $g=g_0\exp(Z)$ for unique elements $g_0 \in G_0$ and $Z \in \mathfrak{g}_1$.

Since $P_+\subset P$ is a normal subgroup, we obtain a principal $P/P_{+}\simeq G_0$-bundle $\mathcal{G}/P_+ \to M$ whose total space we denote by $\mathcal{G}_0$ and whose basepoint projection we denote by $\pi_0$. We can project the values of the Cartan connection $\theta$ to $\mathfrak g/\mathfrak p\cong\mathfrak g_{-1}$. Equivariance of $\theta$ then easily implies that the result descends to a 1-form $\omega\in\Omega^1(\mathcal{G}_0,\mathfrak g_{-1})$. Equivariance of $\theta$ also implies that $\omega$ is equivariant with respect to the $G_0$-representation $\rho : G_0 \to \mathrm{GL}(\mathfrak{g}_{-1})$ obtained by restricting the adjoint representation to $\mathfrak{g}_{-1}$;
\begin{equation}\label{eqn:adjoint}
\rho=\Ad(\,\cdot\,)|_{\mathfrak{g}_{-1}}: G_0 \to \mathrm{GL}(\mathfrak{g}_{-1}), \qquad g_0 \mapsto \Ad(g_0)|_{\mathfrak{g}_{-1}}.
\end{equation}
This representation is infinitesimally effective. The construction then easily implies that $\omega$ is a soldering form in the sense of \cref{sect:soldering}. Hence we obtain a $G_0$-structure on the manifold $M$ and it turns out that, apart from the case of projective structures, the normality condition on $\Theta$ ensures that the Cartan geometry $(\mathcal{G}\to M,\theta)$ is an equivalent encoding of this $G_0$-structure. 

\subsection{Weyl structures}\label{subsec:weylstructures}

In order to work explicitly with a parabolic geometry it is often advantageous to fix a Weyl structure for the parabolic geometry. This gives a description of the Cartan geometry $(\mathcal{G}\to M,\theta)$ in terms of the underlying $G_0$-structure $(\mathcal{G}_0\to M,\omega)$ defined above. Here we briefly review the key facts and refer the reader to \cite{MR4604420,MR1997873,MR2532439} for details and additional context.

Following \cite{MR1997873}, we define a Weyl structure for $(\pi : \mathcal{G} \to M,\theta)$ to be a $G_0$-equivariant section $\sigma : \mathcal{G}_0 \to \mathcal{G}$ of the projection $\mathcal{G}\to\mathcal{G}_0$. Writing the Cartan connection $\theta$ as $\theta=(\theta_{-1},\theta_0,\theta_1)$ with $\theta_i$ taking values in $\mathfrak{g}_i$, a choice of Weyl structure $\sigma$ gives three $1$-forms on $\mathcal{G}_0$
\begin{equation}\label{eqn:pullbackalongweylstructure}
\begin{aligned}
\omega&=\sigma^*\theta_{-1} \in \Omega^1(\mathcal{G}_0,\mathfrak{g}_{-1}),\\
\vartheta_{\sigma}&=\sigma^*\theta_0 \in \Omega^1(\mathcal{G}_0,\mathfrak{g}_0),\\
\mathsf{P}^{\sigma}&=-\psi_B\circ (\sigma^*\theta_1) \in \Omega^1(\mathcal{G}_0,\mathfrak{g}_{-1}^*).
\end{aligned}
\end{equation}
Here $\omega$ is just the soldering form from \cref{sec:parabol} above. The form $\vartheta_{\sigma}$ is a principal $G_0$-connection on $\pi_0 : \mathcal{G}_0 \to M$ referred to as the \emph{Weyl connection} determined by $\sigma$. For the last component, we let $B$ denote a suitable constant multiple of the Killing form of $\mathfrak{g}$ and $\psi_B : \mathfrak{g}_{1} \to \mathfrak{g}_{-1}^*$ the linear map defined by the rule
\[
\psi_B(Z)(X)=B(Z,X)
\]
for all $Z \in \mathfrak{g}_1$ and all $X \in \mathfrak{g}_{-1}$. It is well-known that $\psi_B$ is an isomorphism. In the literature on parabolic geometries, $\psi_B$ is usually suppressed from the notation and $\mathfrak g_1$ is simply identified with $(\mathfrak g_{-1})^*$.

The multiple $B$ of the Killing form is chosen so that the form $\mathsf{P}^{\sigma}$ represents the so-called \emph{Rho tensor} $\boldsymbol{\mathsf{P}}^{\sigma}$ of the Weyl structure $\sigma$, thought of as a $1$-form on $M$ with values in $T^*M$. Notice that here our convention for the Rho tensor agrees with the classical definitions for conformal and projective structures and hence differs by a sign from \cite{MR4604420,MR1997873,MR2532439}.  

  \subsection{On the Rho tensor}\label{sec:Schouten}
  Here we briefly explain how the normalization condition on the curvature of a $|1|$-graded parabolic geometry leads to a way to explicitly determine the Rho tensor associated to a Weyl structure. We start by introducing a canonical object on $M$, which will also be useful for other purposes. Observe that we have a $G_0$-invariant multilinear map
  \[
\mathfrak{g}_{-1}\times \mathfrak{g}_1 \times \mathfrak{g}_{-1}\times \mathfrak{g}_1 \to \R, \qquad (Z_1,W_1,Z_2,W_2) \mapsto B([[Z_1,W_1],Z_2],W_2) 
  \] 
  which induces a $\binom{2}{2}$-tensor field $\boldsymbol{\mathsf{T}}$ on $M$. We will interpret this below as associating to a vector field $X\in\mathfrak X(M)$ and a one-form $\alpha\in\Omega^1(M)$ a $\binom11$-tensor field $\{X,\alpha\}=\boldsymbol{\mathsf{T}}(X,\alpha,\cdot,\cdot)$ which in particular can be viewed as a section of $\operatorname{End}(TM)$ or of $\operatorname{End}(T^*M)$. The explicit meaning of $\{X,\alpha\}$ depends on the structure in question. For the three examples we treat, explicit formulae for $ \{X,\alpha\}\in \End(TM)$ are in \eqref{fund-proj}, \eqref{fund-conf}, and \eqref{fund-Grass} below.

  Now given two vector fields $X,Y\in\mathfrak X(M)$ and a $T^*M$-valued one-form $\boldsymbol{\mathsf{P}}$, we define a $\binom13$-tensor field $\partial\boldsymbol{\mathsf{P}}$ by
\begin{equation}\label{partial-Rho}
  \partial\boldsymbol{\mathsf{P}}(X,Y):=\{X,\boldsymbol{\mathsf{P}}(Y)\}-\{Y,\boldsymbol{\mathsf{P}}(X)\}.
\end{equation}
By construction, this is skew symmetric in $X$ and $Y$ and thus defines a two-form with values in $\operatorname{End}(TM)$, so this looks like the curvature of a linear connection on $TM$. Now for a Weyl structure $\sigma$, we consider the induced Weyl connection $\theta_{\sigma}$. The curvature of this Weyl connection is equivalently encoded by a two-form $\mathbf{R}^\sigma$ with values in $\operatorname{End}(TM)$. Now it turns out that the normalization condition on the Cartan connection implies that the Rho tensor $\boldsymbol{\mathsf{P}}^\sigma$ is uniquely characterized by the fact that $\mathbf{R}^\sigma-\partial\boldsymbol{\mathsf{P}}^\sigma$ has vanishing Ricci type contraction, see \cite{MR1620484} or \cite[Section 5.2.3]{MR1997873}. Throughout the article we follow the convention of defining the Ricci curvature $\mathrm{Ric}(\nabla)$ of a torsion-free connection $\nabla$ as
\[
(X,Y) \mapsto \mathrm{Ric}(\nabla)(X,Y)=\tr\left (Z \mapsto R^{\nabla}(Z,X)(Y)
\right)
\]
where the curvature operator $R^{\nabla}$ is defined as usual by
\[
R^{\nabla}(X,Y)(Z)=\nabla_X\nabla_YZ-\nabla_Y\nabla_XZ-\nabla_{[X,Y]}Z. 
\]
Notice that this convention, while common in projective differential geometry, differs (by a swap of the arguments) from the standard convention in Riemannian geometry. 

\section{The almost para-K\"ahler structure of a Weyl structure}

\subsection{Construction of the almost para-K\"ahler structure}

We briefly review the construction of the almost para-K\"ahler structure associated to a torsion-free $|1|$-graded parabolic geometry given in \cite{MR4604420}. Notice that we may think of a torsion-free $|1|$-graded parabolic geometry $(\pi : \mathcal{G} \to M,\theta)$ of type $(G,P)$ on $M$ as a Cartan geometry $(\Pi : \mathcal{G} \to A,\theta)$ of type $(G,G_0)$ on the quotient $A:=\mathcal{G}/G_0$. In doing so, the tangent bundle of $A$ becomes $TA=\mathcal{G}\times_{G_0} (\mathfrak{g}/\mathfrak{g}_0)$, where $G_0$ acts via the adjoint action on $\mathfrak{g}$ and hence on $\mathfrak{g}/\mathfrak{g}_0$. The $G_0$-module $\mathfrak{g}/\mathfrak{g}_0$ is isomorphic to $\mathfrak{g}_{-1}\oplus \mathfrak{g}_1$, where again $G_0$ acts on both summands via the adjoint representation. Consequently, the tangent bundle of $A$ decomposes into a direct sum of rank $n$ vector bundles $TA=L^+\oplus L^{-}$, where $L^{\pm}=\mathcal{G}\times_{G_0} \mathfrak{g}_{\pm 1}$. We consider the $1$-form $\eta=\psi_B\circ \theta_1 \in \Omega^1(\mathcal{G},\mathfrak{g}_{-1}^*)$. Explicitly, we have
\[
\eta(v)(X)=B(\theta_1(v),X)
\]
for all $v \in T\mathcal{G}$ and all $X \in \mathfrak{g}_{-1}$. It follows from the properties of the Cartan connection and the invariance of $B$ under the adjoint representation that the $1$-form $\eta$ is $G_0$-equivariant and semibasic for the projection $\Pi : \mathcal{G} \to A$. Consequently, $\eta$ represents a $1$-form $\boldsymbol{\eta}$ on $A$ with values in $(L^{-})^*\subset T^*A$. Hence we may view $\boldsymbol{\eta}$ as a section of $T^*A\otimes T^*A$. The symmetric part $h=\operatorname{Sym}\boldsymbol{\eta}$ is then a pseudo-Riemannian metric of split-signature on $A$. This uses that $\psi_B : \mathfrak{g}_1\to\mathfrak{g}_{-1}^*$ is an isomorphism. The alternating part $\Omega=\operatorname{Alt} \boldsymbol{\eta}$ turns out to be a symplectic form by torsion-freeness (see \cite[Theorem 3.1]{MR4604420}), and the pair $(h,\Omega)$ is an almost para-K\"ahler structure. Furthermore, the sections of $A \to M$ are in bijective correspondence with the Weyl structures for $(\pi : \mathcal{G} \to M,\theta)$. Remarkably, by \cite[Theorem 3.5]{MR4604420}, the normalization conditions for $|1|$-graded parabolic geometries imply that the metric $h$ always is Einstein and hence $(h,\Omega)$ is an almost para-K\"ahler Einstein structure. We refer to \cite{MR4604420} for further details and to~\cite{MR4366946} for recent results about the geometry of $4$-dimensional para-K\"ahler Einstein structures.

\subsection{The choice of a Weyl structure}

In this section we shall prove the main structural identity \eqref{eqn:universalstructure}. We start by identifying $\mathcal{G}$ with the product $\mathcal{G}_0\times \mathfrak{g}_1$ equipped with a suitable right action. An element of $\mathcal{G}_0$ will be denoted by $[u]$, where $u \in \mathcal{G}$. On $\mathcal{G}_0\times \mathfrak{g}_1$ a right $P$-action $\hat{R}$ is defined by the rule
\[
([u],Z)\cdot g=([u\cdot g_0],\Ad(g_0^{-1})(Z)+W)
\]
for all $([u],Z) \in \mathcal{G}_0\times\mathfrak{g}_1$ and all $g=g_0\exp(W)\in P$ and as the basepoint projection we take the map
\[
\hat{\pi} : \mathcal{G}_0\times \mathfrak{g}_1 \to M, \quad ([u],Z) \mapsto \pi_0([u]). 
\] 
With these definitions, $\hat{\pi} : \mathcal{G}_0\times \mathfrak{g}_1 \to M$  is indeed a right principal $P$-bundle and moreover, the choice of a Weyl structure identifies this bundle with $\pi : \mathcal{G} \to M$:

\begin{prop}\label{prop:isomweylstructure}
Let $(\pi : \mathcal{G} \to M,\theta)$ be a $|1|$-graded parabolic geometry of type $(G,P)$. Then every Weyl structure $\sigma : \mathcal{G}_0 \to \mathcal{G}$ induces an isomorphism of principal $P$-bundles 
\[
\Phi_{\sigma} : \mathcal{G}_0\times \mathfrak{g}_1 \to \mathcal{G}, \qquad ([u],Z)\mapsto \sigma([u])\cdot \exp(Z)
\]
satisfying
\[
(\Phi_{\sigma})^*\theta=\d \mathcal{Z}+\sigma^*\theta+[\sigma^*\theta,\mathcal{Z}]+\frac{1}{2}[[\sigma^*\theta,\mathcal{Z}],\mathcal{Z}],
\]
where $\mathcal{Z} : \mathcal{G}_0 \times \mathfrak{g}_1 \to \mathfrak{g}_1$ denotes the projection onto the second factor, the brackets are in $\mathfrak g$, and we omit writing the pullbacks from the first factor. 
\end{prop}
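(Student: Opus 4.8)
The plan is to verify directly that $\Phi_\sigma$ is a principal bundle isomorphism and then compute the pullback of $\theta$ by reducing everything to the Maurer--Cartan calculus on $P$ together with the properties \eqref{eqn:pullbackrightaction} of the Cartan connection. Let me think about how to organize this.

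The map is $\Phi_\sigma([u],Z) = \sigma([u])\cdot\exp(Z)$. To check it's a $P$-bundle isomorphism, I need: (1) it covers the identity on $M$, which is clear since $\pi(\sigma([u])\cdot\exp(Z)) = \pi_0([u]) = \hat\pi([u],Z)$; (2) it's $P$-equivariant for the action $\hat R$ defined above; (3) it's a diffeomorphism. For equivariance, take $g = g_0\exp(W)$ and compute $\Phi_\sigma(([u],Z)\cdot g) = \sigma([u]\cdot g_0)\cdot\exp(\Ad(g_0^{-1})Z + W)$, and use $G_0$-equivariance of $\sigma$ (i.e. $\sigma([u]\cdot g_0) = \sigma([u])\cdot g_0$) plus the identity $g_0\exp(Z) = \exp(\Ad(g_0)Z)g_0$ in $P$. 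The point is that $\exp(\Ad(g_0^{-1})Z + W) = \exp(\Ad(g_0^{-1})Z)\exp(W)$ because $\mathfrak g_1$ is abelian (it's a $|1|$-grading, so $[\mathfrak g_1,\mathfrak g_1]\subset\mathfrak g_2 = 0$), and this should rearrange to match $\sigma([u])\cdot g_0\exp(W)\cdot\ldots$. Bijectivity follows from the unique decomposition $g = g_0\exp(Z)$ of elements of $P$ recalled in Section~\ref{sec:parabol}.

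For the pullback formula, the main computation is to express $(\Phi_\sigma)^*\theta$. I would fix a point $([u],Z)$ and a tangent vector, and write $\Phi_\sigma$ as a composition: first include into $\mathcal G\times P$ via $([u],Z)\mapsto(\sigma([u]),\exp(Z))$, then apply the right-multiplication map $R:\mathcal G\times P\to\mathcal G$. Then $(\Phi_\sigma)^*\theta$ is the pullback along this inclusion of $R^*\theta$, and \eqref{eqn:pullbackrightaction} gives $R^*\theta = \Upsilon_P + \Ad(g^{-1})\circ\theta$ at $(u,g)$, where $\Upsilon_P$ is the Maurer--Cartan form. With $g = \exp(\mathcal Z)$, the adjoint term becomes $\Ad(\exp(-\mathcal Z))\circ\sigma^*\theta$, which by the grading expands as a finite sum $\sum_k \frac{(-1)^k}{k!}(\ad\mathcal Z)^k(\sigma^*\theta)$; since $\ad\mathcal Z = [\,\cdot\,,\mathcal Z]$ raises degree in the grading and $\mathfrak g$ has only three graded pieces, only $k=0,1,2$ survive, yielding $\sigma^*\theta - [\sigma^*\theta,\mathcal Z] + \tfrac12[[\sigma^*\theta,\mathcal Z],\mathcal Z]$ up to sign bookkeeping. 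The Maurer--Cartan term $\Upsilon_P$ pulled back along $Z\mapsto\exp(Z)$ with $\mathfrak g_1$ abelian contributes exactly $\d\mathcal Z$.

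The delicate point—and the one I expect to be the main obstacle—is the bookkeeping of signs and the interaction between the two factors, i.e. the fact that $\Phi_\sigma$ is not simply the product map composed with $R$ but involves $\sigma$ depending on $[u]$ while the exponential depends on $Z$. Concretely, when I differentiate $\Phi_\sigma$ I must account for variation in \emph{both} slots simultaneously, so the naive "pull back $R^*\theta$" argument needs the chain rule applied carefully to the map $([u],Z)\mapsto(\sigma([u]),\exp(\mathcal Z))$ into $\mathcal G\times P$. I would therefore split the differential into the two partial contributions, check that the $[u]$-variation reproduces the $\Ad(\exp(-\mathcal Z))\circ\sigma^*\theta$ expansion while the $Z$-variation reproduces $\d\mathcal Z$ through the left-logarithmic derivative of $\exp$, and finally confirm the sign conventions match the claimed formula $\d\mathcal Z + \sigma^*\theta + [\sigma^*\theta,\mathcal Z] + \tfrac12[[\sigma^*\theta,\mathcal Z],\mathcal Z]$ (noting that the signs in the statement come out positive, which I would trace back to the precise convention for $\Ad(g^{-1})$ versus $\Ad(\exp(-\mathcal Z))$ and the orientation of the bracket). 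Once the two partial derivatives are assembled and the grading truncation is invoked, the formula follows.
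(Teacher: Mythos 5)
Your proposal is correct and follows essentially the same route as the paper: equivariance via $g_0\exp(Z)g_0^{-1}=\exp(\Ad(g_0)Z)$ together with commutativity of $P_+$, bijectivity from the unique decomposition $g=g_0\exp(Z)$ of elements of $P$, and the pullback formula by factoring $\Phi_\sigma$ through $(\sigma,\exp):\mathcal{G}_0\times\mathfrak{g}_1\to\mathcal{G}\times P$, splitting $R^*\theta$ via \eqref{eqn:pullbackrightaction} into the two partial contributions (which is exactly how the paper handles the chain-rule point you worry about), truncating $\Ad(\exp(-\mathcal{Z}))$ by the grading, and evaluating $\exp^*\Upsilon_P=\d\,\mathrm{Id}_{\mathfrak{g}_1}$ as in Lemma~\ref{lem:pullbackexp}. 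The only slip is in your intermediate expansion: once the brackets are written in the order $[\sigma^*\theta,\mathcal{Z}]$, the factors $(-1)^k$ are already absorbed (since $-[Z,W]=[W,Z]$ and $[Z,[Z,W]]=[[W,Z],Z]$), so the series reads $\sigma^*\theta+[\sigma^*\theta,\mathcal{Z}]+\tfrac12[[\sigma^*\theta,\mathcal{Z}],\mathcal{Z}]$ with all plus signs and no residual sign bookkeeping is needed.
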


For the proof we need the following elementary lemma on the Maurer-Cartan form $\Upsilon_P\in\Omega^1(P,\mathfrak p)$:
\begin{lem}\label{lem:pullbackexp}
The exponential map $\exp : \mathfrak{g}_1 \to P$ satisfies $\exp^*\Upsilon_P=\d\, \mathrm{Id}_{\mathfrak{g}_1}$, where $\mathrm{Id}_{\mathfrak{g}_1}$ denotes the identity map on $\mathfrak{g}_1$.  
\end{lem}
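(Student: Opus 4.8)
The plan is to exploit the single structural fact that makes this computation collapse: since we are in a $|1|$-grading, $[\mathfrak{g}_1,\mathfrak{g}_1]\subseteq\mathfrak{g}_2=\{0\}$, so $\mathfrak{g}_1$ is an abelian subalgebra and $\ad_X(Y)=0$ for all $X,Y\in\mathfrak{g}_1$. First I would recall the standard formula for the derivative of the exponential map of a Lie group $P$ with Lie algebra $\mathfrak{p}$: for $X\in\mathfrak{p}$, under the canonical identification $T_X\mathfrak{p}\cong\mathfrak{p}$, one has $(\d\exp)_X=(\d L_{\exp X})_e\circ\frac{1-e^{-\ad_X}}{\ad_X}$, where $\frac{1-e^{-\ad_X}}{\ad_X}=\sum_{k\geq 0}\frac{(-1)^k}{(k+1)!}(\ad_X)^k$ is the usual entire function of the operator $\ad_X$, and $L_g$ denotes left translation by $g$.

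Next I would pull back the (left) Maurer--Cartan form. By definition $\Upsilon_P|_g=(\d L_{g^{-1}})_g$, so for $X\in\mathfrak{g}_1$ and a tangent vector at $X$ represented by $Y\in\mathfrak{g}_1$,
$$(\exp^*\Upsilon_P)_X(Y)=(\d L_{(\exp X)^{-1}})\circ(\d\exp)_X(Y)=\frac{1-e^{-\ad_X}}{\ad_X}(Y),$$
the two left-translation factors cancelling by the chain rule. Restricting the variables to $\mathfrak{g}_1$ now makes everything but the leading term vanish: since $\ad_X(Y)=[X,Y]=0$, all summands with $k\geq 1$ drop out and the operator reduces to the identity, giving $(\exp^*\Upsilon_P)_X(Y)=Y$. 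Under the same canonical identification $T_X\mathfrak{g}_1\cong\mathfrak{g}_1$, the right-hand side satisfies $(\d\,\mathrm{Id}_{\mathfrak{g}_1})_X(Y)=Y$ as well, so the two $\mathfrak{g}_1$-valued one-forms agree at every point. As a byproduct this confirms that $\exp^*\Upsilon_P$ indeed takes values in $\mathfrak{g}_1\subset\mathfrak{p}$, consistent with $\exp$ mapping into the abelian subgroup $P_+$.

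An alternative, more conceptual route is to observe that abelianness of $\mathfrak{g}_1$ together with the Baker--Campbell--Hausdorff formula makes $\exp:(\mathfrak{g}_1,+)\to P_+$ a homomorphism of Lie groups, $\exp(X)\exp(Y)=\exp(X+Y)$. Naturality of the Maurer--Cartan form under homomorphisms then gives $\exp^*\Upsilon_P=(\d\exp)_0\circ\Upsilon_{(\mathfrak{g}_1,+)}$, and since the Maurer--Cartan form of the additive vector group $(\mathfrak{g}_1,+)$ is precisely $\d\,\mathrm{Id}_{\mathfrak{g}_1}$ while $(\d\exp)_0=\mathrm{Id}_{\mathfrak{g}_1}$, the claim follows at once.

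There is no serious obstacle here; the only point requiring care is the bookkeeping of the canonical identifications $T_X\mathfrak{g}_1\cong\mathfrak{g}_1$ on both sides of the asserted equality, and the verification that the left translations appearing in the derivative-of-$\exp$ formula are cancelled exactly by the left translation defining $\Upsilon_P$. The essential content is simply that the grading forces $\ad_X|_{\mathfrak{g}_1}=0$, which truncates the exponential-derivative series to its constant term.
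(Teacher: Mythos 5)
Your proof is correct --- in fact you give two correct arguments --- but neither is quite the paper's own, which is more elementary than your first route and less conceptual than your second. The paper performs the same initial reduction as you (pull back, write $\Upsilon_P$ via left translation, cancel by the chain rule to get $(L_{\exp(X)^{-1}}\circ\exp)^{\prime}_X(Y)$), but then, instead of quoting the general formula $(\d\exp)_X=(\d L_{\exp X})_e\circ\sum_{k\geq 0}\tfrac{(-1)^k}{(k+1)!}(\ad_X)^k$, it evaluates the composite directly as a curve derivative: $(L_{\exp(X)^{-1}}\circ\exp)^{\prime}_X(Y)=\tfrac{\d}{\d t}\big|_{t=0}\exp(-X)\exp(X+tY)=\tfrac{\d}{\d t}\big|_{t=0}\exp(tY)=Y$, using $[X,X+tY]=0$ to combine the exponentials. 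So the paper needs only the identity $\exp(Z)\exp(W)=\exp(Z+W)$ for commuting arguments, not the differential-of-exponential series, whose proof is itself nontrivial; your first argument is a heavier hammer, though it does make transparent that the obstruction to the lemma in a general grading would be exactly the higher terms of the $\tfrac{1-e^{-\ad_X}}{\ad_X}$ operator. Your second argument --- $\exp:(\mathfrak{g}_1,+)\to P_+$ is a Lie group homomorphism, Maurer--Cartan forms are natural under homomorphisms, and the Maurer--Cartan form of a vector group is $\d\,\mathrm{Id}$ --- is genuinely different from the paper and arguably the cleanest formulation, at the cost of invoking naturality of the Maurer--Cartan form rather than computing anything; it is also the same mechanism the paper implicitly relies on later when it uses $\exp(Z)\exp(W)=\exp(Z+W)$ in the proof of Proposition 3.1. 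All three arguments hinge on the single fact $[\mathfrak{g}_1,\mathfrak{g}_1]=\{0\}$, and your bookkeeping of the identifications $T_X\mathfrak{g}_1\cong\mathfrak{g}_1$ is handled correctly.
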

\begin{proof}
For $X,Y \in \mathfrak{g}_1$, we compute
\begin{align*}
(\exp^*\Upsilon_P)_X(Y)&=(\Upsilon_P)_{\exp(X)}(\exp^{\prime}_X(Y))=(L_{\exp(X)^{-1}})^{\prime}_{\exp(X)}(\exp^{\prime}_X(Y))\\
&=(L_{\exp(X)^{-1}}\circ \exp)^{\prime}_X(Y)\\
&=\left.\frac{\d}{\d t}\right|_{t=0}\exp(-X)\exp(X+tY)=\left.\frac{\d}{\d t}\right|_{t=0}\exp(tY)=Y,
\end{align*}
where we use that $[X,X+tY]=0$ since $[\mathfrak{g}_1,\mathfrak{g}_1]=\{0\}$. 
\end{proof}
\begin{proof}[Proof of \cref{prop:isomweylstructure}]
First observe that since $[\mathfrak{g}_1,\mathfrak{g}_1]=\{0\}$, we have 
\[
\exp(Z)\exp(W)=\exp(Z+W)
\] for all $Z,W \in \mathfrak{g}_1$. As a consequence, the standard identity 
\[
g\exp(X)g^{-1}=\exp(\Ad(g)(X)), \qquad g \in G, X \in \mathfrak{g}
\]
implies that for all $g_0\exp(W)\in P$ and $h_0\exp(Z) \in P$, we have 
\begin{equation}\label{eqn:lieid}
g_0\exp(Z)h_0\exp(W)=g_0h_0\exp(\Ad(h_0^{-1})(Z)+W),
\end{equation}
where we use that $\Ad(h_0^{-1})$ preserves $\mathfrak{g}_1$.

Since $\exp : \mathfrak{g}_1 \to P_+$ is a diffeomorphism and $\sigma : \mathcal{G}_0 \to \mathcal{G}$ is equivariant, it easily follows that $\Phi_{\sigma}$ is a diffeomorphism. In order to verify the equivariance of $\Phi_{\sigma}$, we compute for all $([u],Z) \in \mathcal{G}_0\times \mathfrak{g}_1$ and for all $g=g_0\exp(W) \in P$
\[
\begin{aligned}
\Phi_{\sigma}(([u],Z)\cdot g)&=\Phi_{\sigma}(([u\cdot g_0],\Ad(g_0^{-1})(Z)+W))\\
&=\sigma([u\cdot g_0])\cdot \exp(\Ad(g_0^{-1})(Z)+W)\\
&=\sigma([u])\cdot g_0\exp(\Ad(g_0^{-1})(Z)+W)\\
&=\sigma([u])\cdot\exp(Z)g_0\exp(W)\\
&=\sigma([u])\cdot \exp(Z)g=\Phi_{\sigma}(([u],Z))\cdot g,
\end{aligned}
\]
where we used the definitions of the various mappings as well as \eqref{eqn:lieid} and the equivariance of $\sigma$. It follows that $\Phi_{\sigma}$ is a principal $P$-bundle isomorphism.  

For the second part of the lemma we denote by $\Ad^{-1}$ the composition of $\Ad$ with the inversion in $P$ and compute 
\begin{equation}\label{eqn:pullbackomegapartial}
\begin{aligned}
(\Phi_{\sigma})^*\theta&=(\sigma,\exp)^*(R^*\theta)=(\sigma,\exp)^*\left(\Upsilon_P+\Ad^{-1}\circ\, \theta\right)\\
&=\exp^*\Upsilon_P+\sigma^*\left(\Ad^{-1}\circ\, \theta\right)
\end{aligned}
\end{equation}
where we used \eqref{eqn:pullbackrightaction} and think of $(\sigma,\exp)$ as a map $\mathcal{G}_0\times \mathfrak{g}_1 \to \mathcal{G}\times P$. Now for $Z,W \in \mathfrak{g}$ we have the standard identity
\[
\Ad(\exp(Z))(W)=\sum_{k=0}^\infty \frac{\ad(Z)^k}{k!}(W)=W+[Z,W]+\frac{1}{2}[Z,[Z,W]]+\cdots.
\]
As a consequence, we obtain for all $Z \in \mathfrak{g}_1$
\begin{equation}\label{eqn:pullbackrightaction1grad}
(R_{\exp(Z)})^*\theta=\Ad(\exp(-Z))\circ\, \theta=\theta+[\theta,Z]+\frac{1}{2}[[\theta,Z],Z],
\end{equation}
where we use that the sum terminates after three summands, since $[\mathfrak{g}_1,\mathfrak{g}_1]=0$. Combining \eqref{eqn:pullbackomegapartial}, \eqref{eqn:pullbackrightaction1grad} and \cref{lem:pullbackexp}, we obtain
\begin{equation}\label{eqn:pullbackcartanweylisom}
(\Phi_{\sigma})^*\theta=\d \mathcal{Z}+\sigma^*\theta+[\sigma^*\theta,\mathcal{Z}]+\frac{1}{2}[[\sigma^*\theta,\mathcal{Z}],\mathcal{Z}],
\end{equation}
as claimed. 
\end{proof}

Recall from \cref{subsec:weylstructures} that for every choice of Weyl structure $\sigma : \mathcal{G}_0 \to \mathcal{G}$, $\omega=\sigma^*\theta_{-1} \in \Omega^1(\mathcal{G}_0,\mathfrak{g}_{-1})$ is the soldering form of the $G_0$-structure $\pi_0 :\mathcal{G_0}\to M$. Moreover, $\vartheta_{\sigma}=\sigma^*\theta_0 \in \Omega^1(\mathcal{G}_0,\mathfrak{g}_0)$ is a principal connection on $\pi_0 : \mathcal{G}_0 \to M$. Using \eqref{eqn:pullbackalongweylstructure} and \eqref{eqn:pullbackcartanweylisom} we thus obtain
\[
(\Phi_{\sigma})^*\theta_1=\d\mathcal{Z}+\sigma^*\theta_1+[\vartheta_{\sigma},\mathcal{Z}]+\frac{1}{2}[[\omega,\mathcal{Z}],\mathcal{Z}]
\]
and hence, using \eqref{eqn:pullbackalongweylstructure} again, we have
\begin{equation}\label{eqn:yetanotherpullback}
(\Phi_{\sigma})^*(\psi_B\circ \theta_1)=\psi_B\circ\left(\d\mathcal{Z}+[\vartheta_{\sigma},\mathcal{Z}]\right)-\mathsf{P}^{\sigma}+\frac{1}{2}\psi_B\circ\left([[\omega,\mathcal{Z}],\mathcal{Z}]\right).
\end{equation}

In order to relate this to the Patterson--Walker metric associated to $\vartheta_{\sigma}$, we first observe that via $\psi_B$, the map $\mathcal Z$ corresponds to the second projection $\mathcal G_0\times\mathfrak g_{-1}^*\to \mathfrak g_{-1}^*$. Moreover, in the notation of \cref{sec:Patterson}, the expression $[\vartheta_{\sigma},\mathcal{Z}]$ can be written as $(\ad\circ\vartheta_{\sigma})(\mathcal Z)$. Invariance of the bilinear form $B$ implies that for $X \in \mathfrak{g}_{-1},Y \in \mathfrak{g}_0$ and $Z \in \mathfrak{g}_1$, we get
  $$
  B(\ad(Y)(X),Z)=-B(X,\ad(Y)(Z)). 
  $$
  This exactly says that, via $\psi_B$, the adjoint action on $\mathfrak g_1$ corresponds to the dual of the adjoint action on $\mathfrak g_{-1}$, which was denoted by $\varrho^*$ in \cref{sec:Patterson}. Together, this shows that the term $\psi_B\circ(\d\mathcal{Z}+[\vartheta_{\sigma},\mathcal{Z}])$ exactly gives the $\mathfrak g_{-1}^*$-valued $1$-form $\zeta_{\vartheta_\sigma}$ defined there. 

Combining this with \eqref{eqn:yetanotherpullback}, we obtain
\begin{equation}
(\Phi_{\sigma})^*(\psi_B\circ \theta_1)=\zeta_{\vartheta_\sigma}-\mathsf{P}^{\sigma}+q
\end{equation}
where $q \in \Omega^1(\mathcal{G}_0\times\mathfrak{g}_{-1}^*,\mathfrak{g}_{-1}^*)$ is given by
\begin{equation}\label{eqn:expressionforq}
q=\frac{1}{2}\psi_B\circ \left([[\omega,\psi_B^{-1}\circ \xi],\psi_B^{-1}\circ \xi]\right). 
\end{equation}
By construction, $q$ represents a $1$-form $\boldsymbol{q}$ on $T^*M$ with values in the pullback of the cotangent bundle of $M$, which is evidently closely related to the operation on $M$ introduced in \cref{sec:Schouten}. More precisely, for $\alpha\in T^*M$ with $\nu(\alpha)=x$ and a tangent vector $X\in T_\alpha T^*M$, we get
  \begin{equation}\label{q-formula}
    \boldsymbol{q}(\alpha)(X)=\tfrac12\{\nu^{\prime}_{\alpha}(X),\alpha\}(\alpha)\in T^*_xM=(\nu^*T^*M)_\alpha.
  \end{equation}
  In particular, this shows that $\boldsymbol{q}$ is semibasic for the projection $\nu : T^*M \to M$ and satisfies $(\mathcal{S}_{t})^*\boldsymbol{q}=t^2 \boldsymbol{q}$, where $\mathcal{S}_t : T^*M \to T^*M$ denotes scaling of a cotangent vector by the factor $t \in \R\setminus\{0\}$. 

Finally, notice that using $\psi_B$ to identify $\mathfrak g_1$ with $\mathfrak g_{-1}^*$, we get $T^*M\simeq \mathcal{G}_0 \times_{G_0} \mathfrak{g}_1$. But then the $G_0$-equivariant diffeomorphism $\Phi_\sigma: \mathcal{G}_0\times\mathfrak{g}_1 \to \mathcal{G}$ induces a diffeomorphism $\varphi_{\sigma} : T^*M \to \mathcal{G}/G_0=A$ and we have
\[
(\varphi_{\sigma})^*\boldsymbol{\eta}=\boldsymbol{\zeta}_{\vartheta_{\sigma}}-\nu^*\boldsymbol{\mathsf{P}}^{\sigma}+\boldsymbol{q}.
\]
Recall that $\boldsymbol{\eta}$ is the $(L^-)^*$-valued $1$-form on $A$ whose symmetric and alternating part give the almost para-K\"ahler structure $(h,\Omega)$ of the parabolic geometry $(\pi : \mathcal{G} \to M,\theta)$. Recall also that the symmetric part of the form $\boldsymbol{\zeta}_{\vartheta_{\sigma}}$ is the Patterson--Walker metric $h_{\vartheta_s}$ of the Weyl connection $\vartheta_{\sigma}$ determined by $\sigma$. Finally, viewed as a bilinear form on $T_\alpha T^*M$, $\boldsymbol{q}(\alpha)$ turns out to be symmetric. By construction, $\boldsymbol{q}(\alpha)$ is the pullback of a bilinear form on $T_xM$ with $x=\nu(\alpha)$. The latter is induced by the bilinear form on $\mathfrak g_{-1}$ that, for some fixed $Z\in\mathfrak g_1$, maps $(X_1,X_2)$ to
$$
\tfrac12B([[X_1,Z],Z],X_2)=-\tfrac12B([X_1,Z],[Z,X_2])=\tfrac12B([X_1,Z],[X_2,Z]),
$$
so this is obviously symmetric. In summary, we have thus shown:

\begin{thm}\label{thm:main}
Let $(\pi :\mathcal{G} \to M,\theta)$ be a torsion-free $|1|$-graded parabolic geometry with associated almost para-K\"ahler structure $(h,\Omega)$ on $A$ and $\sigma : \mathcal{G}_0 \to \mathcal{G}$ a choice of Weyl structure. Then we have
\[
(\varphi_\sigma)^*h=h_{\vartheta_\sigma}-\nu^*(\operatorname{Sym}\boldsymbol{\mathsf{P}}^{\sigma})+\boldsymbol{q}, 
\]
where $\boldsymbol{q}$ is given by formula \eqref{q-formula}.
\end{thm}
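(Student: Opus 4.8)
The plan is to deduce the stated identity by taking the symmetric part of a finer, $(L^-)^*$-valued identity for $\boldsymbol\eta$, which I would assemble from the pullback computation in \cref{prop:isomweylstructure}. Starting from the expression for $(\Phi_\sigma)^*\theta$ given there, I would project onto the $\mathfrak g_1$-component. Because the grading satisfies $[\mathfrak g_i,\mathfrak g_j]\subset\mathfrak g_{i+j}$, only finitely many terms survive: the $\mathfrak g_1$-part of $\sigma^*\theta$ gives $\sigma^*\theta_1$; the bracket $[\sigma^*\theta,\mathcal Z]$ contributes $[\vartheta_\sigma,\mathcal Z]$ (using $[\mathfrak g_0,\mathfrak g_1]\subset\mathfrak g_1$) alongside the exact term $\d\mathcal Z$; and the double bracket contributes $\tfrac12[[\omega,\mathcal Z],\mathcal Z]$, where $[\omega,\mathcal Z]\in\mathfrak g_0$ so that the outer bracket lands back in $\mathfrak g_1$. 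Applying $\psi_B$ then produces the three-term expression \eqref{eqn:yetanotherpullback}.

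Next I would identify each of the three terms on the target $T^*M$. The middle term is by definition $-\mathsf P^\sigma$, representing $-\nu^*\boldsymbol{\mathsf P}^\sigma$. For the first term I would invoke the invariance of $B$ to verify that the $\ad$-action on $\mathfrak g_1$ corresponds under $\psi_B$ to the dual representation $\varrho^*$ on $\mathfrak g_{-1}^*$; this matches $\psi_B\circ(\d\mathcal Z+[\vartheta_\sigma,\mathcal Z])$ precisely with the form $\zeta_{\vartheta_\sigma}$ of \cref{sec:Patterson}, whose symmetric part is the Patterson--Walker metric. The last term is the $q$ of \eqref{eqn:expressionforq}, which descends to the semibasic one-form $\boldsymbol q$ with the fibrewise-quadratic formula \eqref{q-formula}. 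Using $\psi_B$ to identify $\mathfrak g_1\simeq\mathfrak g_{-1}^*$, so that $\Phi_\sigma$ descends to a diffeomorphism $\varphi_\sigma:T^*M\to A$, these identifications combine into the single identity $(\varphi_\sigma)^*\boldsymbol\eta=\boldsymbol\zeta_{\vartheta_\sigma}-\nu^*\boldsymbol{\mathsf P}^\sigma+\boldsymbol q$ of $(L^-)^*$-valued (equivalently $T^*(T^*M)$-valued) one-forms.

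Finally I would take symmetric parts of both sides, using $h=\operatorname{Sym}\boldsymbol\eta$ and $h_{\vartheta_\sigma}=\operatorname{Sym}\boldsymbol\zeta_{\vartheta_\sigma}$, while $\nu^*\boldsymbol{\mathsf P}^\sigma$ symmetrizes to $\nu^*\operatorname{Sym}\boldsymbol{\mathsf P}^\sigma$. The step I expect to need the most care is checking that $\boldsymbol q$ is \emph{already} symmetric, so that $\operatorname{Sym}\boldsymbol q=\boldsymbol q$ and no spurious alternating contribution appears. For this I would reduce to the underlying bilinear form on $\mathfrak g_{-1}$ sending $(X_1,X_2)\mapsto\tfrac12 B([[X_1,Z],Z],X_2)$ for a fixed $Z\in\mathfrak g_1$, and apply the invariance of $B$ twice to rewrite it as $\tfrac12 B([X_1,Z],[X_2,Z])$, which is manifestly symmetric in $X_1,X_2$. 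Combined with the semibasicity and degree-two homogeneity of $\boldsymbol q$ established after \eqref{q-formula}, taking symmetric parts then yields exactly the claimed formula for $(\varphi_\sigma)^*h$.
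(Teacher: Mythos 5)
Your proposal is correct and follows essentially the same route as the paper: extracting the $\mathfrak g_1$-component of the pullback formula from \cref{prop:isomweylstructure}, identifying the three resulting terms with $\zeta_{\vartheta_\sigma}$, $-\mathsf P^\sigma$ and $q$, descending to the identity $(\varphi_\sigma)^*\boldsymbol\eta=\boldsymbol\zeta_{\vartheta_\sigma}-\nu^*\boldsymbol{\mathsf P}^\sigma+\boldsymbol q$, and taking symmetric parts after verifying via invariance of $B$ that $\boldsymbol q$ is already symmetric. The only quibble is a sign in the final rewriting (invariance gives $\tfrac12 B([X_1,Z],[Z,X_2])=-\tfrac12B([X_1,Z],[X_2,Z])$), but since either expression is manifestly symmetric in $X_1,X_2$, this does not affect the argument.
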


Of course, $(\varphi_\sigma)^*h$ is always an Einstein metric. Moreover,
  it is shown in \cite{MR4604420} that
  $$
(\varphi_\sigma)^*\Omega=-\d\tau+\nu^*(\operatorname{Alt}\boldsymbol{\mathsf{P}}^{\sigma}).
  $$ Hence \cref{thm:main} provides an analogue of \cref{prop:projective} and
  \cref{prop:conformal} for Weyl connections compatible with a torsion-free
  $|1|$-graded parabolic geometry. We will show in \cref{sec:examples} below
  how to specialize this to conformal and projective structures. 

\begin{rmk}[Local coordinate expression]
In terms of a choice of local coordinates $(x^i) : U \to \R^n$ on some open subset $U\subset M$, the metric $(\varphi_{\sigma})^*h$ takes the following explicit form. Let $(x^i,\xi_i) : \nu^{-1}(U) \to \R^{2n}$ denote the canonical coordinates induced on $\nu^{-1}(U)\subset T^*M$. The Weyl connection $\vartheta_{\sigma}$ induces a torsion-free connection $\nabla$ on $TM$ whose Christoffel symbols with respect to the coordinates $(x^i)$ we denote by $\Gamma^{i}_{jk}$. The Patterson--Walker metric of $\vartheta_{\sigma}$ can then be expressed as
\[
(\d\xi_i-\Gamma^k_{ij}\xi_k\d x^j)\odot \d x^i,
\] 
where $\odot$ denotes the symmetric tensor product. On $\nu^{-1}(U)$ we thus obtain
\[
(\varphi_\sigma)^*h=\left(\d\xi_i-\Gamma^k_{ij}\xi_k\d x^j-\mathsf{P}_{(ij)}+q_{ij}\right)\odot \d x^i,
\]
where we write $\boldsymbol{q}=q_{ij}\d x^i\otimes \d x^j$ for unique real-valued functions $q_{ij}=q_{ji} : U \to \R$ and $\boldsymbol{\mathsf{P}}^{\sigma}=\mathsf{P}_{ij}\d x^i\otimes \d x^j$ for unique real-valued functions $\mathsf{P}_{ij} : U \to \R$. Here and henceforth, we employ the summation convention and $\mathsf{P}_{(ij)}$ denotes symmetrization in the indices $i,j$. 
\end{rmk}

\section{Examples}\label{sec:examples}

\subsection{Projective geometry}\label{sec:projective}

Consider an $n$-dimensional manifold $M$ endowed with a projective structure $[\nabla]$, a class of torsion-free connections that have the same geodesics up to parametrization. This determines a $|1|$-graded parabolic geometry $(\pi : \mathcal{G} \to M,\theta)$, where $G=\mathrm{SL}_{\pm}(n+1,\R)$ is the subgroup of $\mathrm{GL}(n+1,\R)$ consisting of matrices whose determinant is $\pm 1$. The grading of its Lie algebra $\mathfrak{g}=\mathfrak{sl}(n+1,\R)=\{B \in \mathfrak{gl}(n+1,\R)\,|\, \tr B=0\}$ is given by
\[
\mathfrak{g}_{-1}=\left\{\left.\begin{pmatrix} 0 & 0 \\ x & 0 \end{pmatrix} \right | x \in \R^n\right\}, \quad \mathfrak{g}_{1}=\left\{\left.\begin{pmatrix} 0 & y \\ 0 & 0 \end{pmatrix} \right | y \in \R^{n*}\right\}
\]
and
\[
\mathfrak{g}_{0}=\left\{\left.\begin{pmatrix} -\tr A & 0 \\ 0 & A \end{pmatrix} \right | A \in \mathfrak{gl}(n,\R)\right\}.
\]
Here $B$ is normalised so that
\[
B\left(\begin{pmatrix} 0 & 0 \\ x & 0 \end{pmatrix},\begin{pmatrix} 0 & y \\ 0 & 0 \end{pmatrix}\right)=yx=y(x).
\]

Next, one computes that for $x\in\mathbb R^n$ and $y,z\in\mathbb R^{n*}$ we get
\begin{equation}\label{bracket-formula}
\left[\left[\begin{pmatrix} 0 & 0 \\ x & 0 \end{pmatrix},\begin{pmatrix} 0 & y \\ 0 & 0 \end{pmatrix}\right],\begin{pmatrix} 0 & z \\ 0 & 0 \end{pmatrix}\right]=\begin{pmatrix} 0 & -(yx)z-(zx)y \\ 0 & 0 \end{pmatrix}.
\end{equation}
This shows that for $\xi\in T_xM$ and $\alpha\in T^*_xM$ we get $\{\xi,\alpha\}(\alpha)=-2\alpha(\xi)\alpha$. Together with formula \eqref{q-formula} and the definition of the tautological form $\tau$ on $T^*M$, this shows that
\[
\boldsymbol{q}=-\tau\otimes \tau,
\] 
in agreement with \cite{MR3833818}. 

The Weyl connection $\vartheta_{\sigma}=\sigma^*\theta_0$ determines a torsion-free connection $\nabla$ on $TM$ which is a representative connection of the projective structure. To compute the associated Rho tensor a similar computation as for formula \eqref{bracket-formula} shows that for $X,Z\in\mathfrak X(M)$ and $\alpha\in\Omega^1(M)$, we get
\begin{equation}\label{fund-proj}
  \{X,\alpha\}(Z)=\alpha(X)Z+\alpha(Z)X.
\end{equation}
Using this and formula \eqref{partial-Rho} from \cref{sec:Schouten} we conclude that for $X,Y,Z\in\mathfrak X(M)$, we obtain
$$
\partial\boldsymbol{\mathsf{P}}^\sigma(X,Y)(Z)=\boldsymbol{\mathsf{P}}^\sigma(Y,X)Z-\boldsymbol{\mathsf{P}}^\sigma(X,Y)Z+\boldsymbol{\mathsf{P}}^\sigma(Y,Z)X-\boldsymbol{\mathsf{P}}^\sigma(X,Z)Y. 
$$
This easily implies that the Ricci type contraction of $\partial\boldsymbol{\mathsf{P}}^\sigma$ maps $X,Y$ to $n\boldsymbol{\mathsf{P}}^\sigma(X,Y)-\boldsymbol{\mathsf{P}}^\sigma(Y,X)$. Now let $\operatorname{Ric}(\nabla)$ be the Ricci type contraction of the curvature of $\vartheta_{\sigma}$. Then the discussion in \cref{sec:Schouten} shows that we must have
$$
\operatorname{Ric}(\nabla)(X,Y)=n\boldsymbol{\mathsf{P}}^\sigma(X,Y)-\boldsymbol{\mathsf{P}}^\sigma(Y,X). 
$$
Symmetrizing and alternating, we deduce that $\operatorname{Sym}\operatorname{Ric}(\nabla)=(n-1)\operatorname{Sym}\boldsymbol{\mathsf{P}}^\sigma$ and $\operatorname{Alt}\operatorname{Ric}(\nabla)=(n+1)\operatorname{Alt}\boldsymbol{\mathsf{P}}^\sigma$, and hence
\[
\boldsymbol{\mathsf{P}}^{\sigma}=\frac{1}{(n-1)}\operatorname{Sym}\operatorname{Ric}(\nabla)+\frac{1}{(n+1)}\operatorname{Alt}\operatorname{Ric}(\nabla).
\]

Hence we see that for projective structures our main \cref{thm:main} implies the \cref{prop:projective}. 

\begin{rmk}[Dancing metric]
Starting from the standard projective structure on $\mathbb{RP}^2$, the resulting para-K\"ahler--Einstein structure is defined on $A=\mathrm{SL}(3,\R)/\mathrm{GL}(2,\R)$. In this case the Einstein metric is referred to as the \emph{dancing metric} \cite{MR3811534,MR4401805} because of its significance in the ``rolling'' of the projective planes $\mathbb{RP}^2$ and $\mathbb{RP}^{2*}$.  This para-K\"ahler--Einstein structure was first constructed in \cite{MR0066020} (in any dimension).
\end{rmk}

\begin{rmk}[para-c-projective compactification]
In the projective case, the almost para-K\"ahler structure on $A$ admits a so-called para-c-projective compactification, see \cite{MR4078278}, an analogue of a c-projective compactification, see \cite{MR3956522}.
\end{rmk}

\subsection{Conformal geometry}\label{sec:conformal}
A conformal manifold $(M,[g])$ of dimension $n\geqslant 3$ gives rise to a $|1|$-graded parabolic geometry $(\pi : \mathcal{G} \to M,\theta)$ where $G$ is defined as follows: Consider the matrix
\[
J=\begin{pmatrix} 0 & 0 & -1 \\ 0 & \mathrm{I}_n & 0 \\ -1 & 0 & 0 \end{pmatrix}
\]
of size $n+2$ and let $G=\mathrm{O}(n+1,1)$ denote the subgroup of $\mathrm{GL}(n+2,\R)$ consisting of matrices $a$ satisfying $a^tJa=J$. The Lie algebra $\mathfrak{g}=\mathfrak{o}(n+1,1)$ of $G$ consists of matrices of the form 
\[
\begin{pmatrix}
s & z & 0  \\ x & A & z^t \\ 0 & x^t & -s
\end{pmatrix}
\]
where $s \in \R$, $x\in \R^n$, $z\in\R^{n*}$ and $A \in \mathfrak{o}(n)$ is a skew-symmetric matrix of size $n$. The grading of $\mathfrak{g}$ is given by
\[
\mathfrak{g}_{-1}=\left\{\left.\begin{pmatrix} 0 & 0 & 0 \\ x & 0 & 0 \\ 0 & x^t & 0 \end{pmatrix}\right| x \in \R^n\right\}, \quad \mathfrak{g}_{1}=\left\{\left.\begin{pmatrix} 0 & z & 0 \\ 0 & 0 & z^t \\ 0 & 0 & 0 \end{pmatrix}\right| z \in \R^{n*}\right\}
\]
and
\[
\mathfrak{g}_{0}=\left\{\left.\begin{pmatrix} s & 0 & 0 \\ 0 & A & 0 \\ 0 & 0 & -s \end{pmatrix}\right| s \in \R, A \in \mathfrak{o}(n)\right\}.
\]
We normalise $B$ such that
\[
B\left(\begin{pmatrix} 0 & 0 & 0 \\ x & 0 & 0 \\ 0 & x^t & 0 \end{pmatrix},\begin{pmatrix} 0 & z & 0 \\ 0 & 0 & z^t \\ 0 & 0 & 0 \end{pmatrix}\right)=z(x)=zx.
\]

Computing triple brackets as for formula \eqref{bracket-formula} one verifies that (with obvious notation) we get for $x,y\in\R^n$ and $z,w\in\R^{n*}$ the expressions
\begin{gather}\label{bracket-conf-1}
   [[x,z],w]=-z(x)w-w(x)z+(w^t\cdot z^t)x^t\\ \label{bracket-conf-2}  [[x,z],y]=z(x)y+z(y)x-(x\cdot y)z^t
\end{gather}

Now formula \eqref{bracket-conf-1} shows that for $\xi\in T_xM$ and $\alpha\in T^*_xM$ we get
$$
\{\xi,\alpha\}(\alpha)=2\alpha(\xi)\alpha+g^\#_x(\alpha,\alpha)g_x(\xi,\cdot).
$$
Here $g$ is some metric from the conformal class and $g^\#$ is its dual metric, which immediately implies that the operation is conformally invariant. Together with formula \eqref{q-formula} and the definition of the tautological form $\tau$ on $T^*M$, this shows that 
\[
\boldsymbol{q}=-\tau \otimes \tau+\frac{1}{2}|\cdot|^2_{g^{\sharp}}\nu^*g. 
\]
Here $|\cdot|^2_{g^{\sharp}}$ is interpreted as a real-valued smooth function on $T^*M$ and the pullback $\nu^*g$ is interpreted as a one-form on $T^*M$ with values in $\nu^*T^*M$. 

The Weyl connection $\vartheta_{\sigma}=\sigma^*\theta_0$ determines a torsion-free connection $\nabla$ on $TM$ which preserves $[g]$ in the sense that for some (any hence any) representative metric $g \in [g]$ there exists a $1$-form $\beta$ such that
\[
\nabla g=\beta\otimes g. 
\]
 
    To compute the Rho-tensor, we first conclude from formula \eqref{bracket-conf-2} that for vector fields $X,Y\in\mathfrak X(M)$ and a one-form $\alpha\in\Omega^1(M)$, we get
    \begin{equation}\label{fund-conf}
    \{X,\alpha\}(Y)=\alpha(X)Y+\alpha(Y)X-g(X,Y)g^\#(\alpha,\cdot). 
    \end{equation}
    Using this and formula \eqref{partial-Rho} from \cref{sec:Schouten} we conclude that for $X,Y,Z\in\mathfrak X(M)$, we can write $\partial\boldsymbol{\mathsf{P}}^\sigma(X,Y)(Z)$ as
    \begin{align*}
      &\boldsymbol{\mathsf{P}}^\sigma(Y,X)Z+\boldsymbol{\mathsf{P}}^\sigma(Y,Z)X-g(X,Z)g^\#(\boldsymbol{\mathsf{P}}^\sigma(Y),\cdot)\\
      -&\boldsymbol{\mathsf{P}}^\sigma(X,Y)Z-\boldsymbol{\mathsf{P}}^\sigma(X,Z)Y+g(Y,Z)g^\#(\boldsymbol{\mathsf{P}}^\sigma(X),\cdot). 
    \end{align*}
    To form the Ricci-type contraction of this, we have to take a local orthonormal frame, insert each element for $X$ and then take the inner product with the same element and sum the results. This sends $(Y,Z)$ to
    $$
(n-1)\boldsymbol{\mathsf{P}}^\sigma(Y,Z)-\boldsymbol{\mathsf{P}}^\sigma(Z,Y)+g(Y,Z)\tr_{g^\#}(\boldsymbol{\mathsf{P}}^\sigma). 
    $$
    Observe that in the literature on conformal geometry usually only the case of Levi-Civita connections of metrics in the conformal class is discussed, for which $\boldsymbol{\mathsf{P}}^\sigma$ is automatically symmetric. Anyway, the discussion in \cref{sec:Schouten} shows that the above expression has to coincide with $\operatorname{Ric}(\nabla)$. To conclude the discussion as in \cref{sec:projective} above, we now have to compute the alternation, and the trace-free part and the trace part of the symmetrization, which gives
    \begin{align*}
      &\operatorname{Sym}_0\operatorname{Ric}(\nabla)=(n-2)\operatorname{Sym}_0\boldsymbol{\mathsf{P}}^\sigma \\
      &\operatorname{Alt}\operatorname{Ric}(\nabla)=n\operatorname{Alt}\boldsymbol{\mathsf{P}}^\sigma \\
      &g\tr_{g^\#}(\operatorname{Ric}(\nabla))= (2n-2)g\tr_{g^\#}(\boldsymbol{\mathsf{P}}^\sigma).
    \end{align*}
    Observe that for a Levi-Civita connection $\tr_{g^\#}(\operatorname{Ric}(\nabla))$ is the scalar curvature of the metric $g$. In any case, we immediately get the general formula 
     
\[
\boldsymbol{\mathsf{P}}^{\sigma}=\frac1{(n-2)}\operatorname{Sym}_0\operatorname{Ric}(\nabla)+\frac1n\operatorname{Alt}\operatorname{Ric}(\nabla)+\frac{1}{n(n-2)}g\tr_{g^\#}(\operatorname{Ric}(\nabla)).
\]

This completes the proof of \cref{prop:conformal} in the
  Riemannian case. For the pseudo-Riemannian case, the proof is completely
  analogous. 
  
\subsection{Grassmannian geometry}\label{sec:Grassmannian}

An \emph{almost Grassmannian structure of type $(m,n)$} on a manifold $M$ consists of two real vector bundles $E$ and $F$ on $M$, of rank $m$ and $n$ and vector bundle isomorphisms $TM \simeq E^*\otimes F\simeq \operatorname{Hom}(E,F)$ and $\Lambda^mE^*\cong\Lambda^nF$. Here $E^*$ denotes the dual of $E$ and the isomorphism between the top exterior powers will not be relevant for us. An almost Grassmannian structure on $M$ gives rise to a $|1|$-graded parabolic geometry $(\pi : \mathcal{G} \to M,\theta)$ where $G=\mathrm{SL}(n+m,\R)$. The structure is called \textit{Grassmannian} if it admits a compatible torsion-free connection on $TM$, which is equivalent to torsion-freeness of the parabolic geometry. The grading of the Lie algebra $\mathfrak{g}=\mathfrak{sl}(n+m,\R)$ of $G$ is given by
\[
\mathfrak{g}_{-1}=\left\{\left.\begin{pmatrix} 0 & 0 \\ x & 0 \end{pmatrix}\right| x \in M(n\times m,\R)\right\}, \quad \mathfrak{g}_{1}=\left\{\left.\begin{pmatrix} 0 & z \\ 0 & 0 \end{pmatrix}\right| z \in M(m\times n,\R)\right\}
\]
and
\[
\mathfrak{g}_{0}=\left\{\left.\begin{pmatrix} B & 0 \\ 0 & A \end{pmatrix}\right| A \in \mathfrak{gl}(n,\R), B \in \mathfrak{gl}(m,\R), \tr(A)+\tr(B)=0\right\},
\]
where $M(n\times m,\R)$ denotes the vector space of $(n\times m)$-matrices with real entries. The main case of interest for our purpose is $m=2$, $n\geq 2$, for which there are examples of such geometries that are torsion-free but not locally isomorphic to $G/P$. For $m=n=2$ such a structure is equivalent to a conformal structure of neutral signature. For $m,n>2$, any torsion-free structure is locally isomorphic to $G/P$, but our results still are of interest, since there is the freedom in the choice of Weyl structure.

  As the invariant form $B$ we use the trace form, which leads to $$B\left(\begin{pmatrix} 0 & 0 \\ x & 0 \end{pmatrix},\begin{pmatrix} 0 & z \\ 0 & 0 \end{pmatrix}\right)=\tr(zx)=\tr(xz).$$ Formally, the setup looks very similar to projective structures (which correspond to the case $m=1$). This is also reflected in the structure of the triple brackets, which formally look very similar to \eqref{bracket-formula}: For $x,y\in M(n\times m,\R)$ and $z,w\in M(m\times n,\R)$ we get (in obvious notation)
  \begin{equation}\label{Grass-brackets}
    [[x,z],w]=-zxw-wxz \qquad [[x,z],y]=xzy+yzx.
  \end{equation}
  However, here we have matrix multiplications so for example $xz$ is a $2\times 2$-matrix and $zxw$ is not simply a multiple of $w$.

  The easiest way to encode our operations geometrically is to define one additional operation on a manifold $M$ endowed with an almost Grassmannian structure. Since $TM\cong E^*\otimes F$, we get $T^*M\cong E\otimes F^*\cong \operatorname{Hom}(F,E)$ and thus composition of linear maps induces a bilinear bundle map $T^*M\times TM\to \operatorname{End}(E)$, which we denote by $(\alpha,X)\mapsto \alpha\circ X$ both on elements and on sections. This can be viewed as an ``refinement'' of the dual pairing, since by definition we get $\alpha (X)=\tr(\alpha\circ X)$, where $\tr$ denotes the point-wise trace. There clearly are analogous composition operations $TM\times \operatorname{End}(E)\to TM$ and $\operatorname{End}(E)\times T^*M\to T^*M$ (and others that we don't need here). In this language, the first formula in \eqref{Grass-brackets} readily shows that for $\alpha\in T^*_xM$ and $\xi\in T_xM$, we get $\{\xi,\alpha\}(\alpha)=-2(\alpha\circ\xi)\circ\alpha$. 

  Next, we get a corresponding refinement $\tau^G\in\Omega^1(T^*M,\operatorname{End}(\nu^*E))$ of the tautological one-form $\tau$ on $T^*M$. By definition, for $\alpha\in T^*M$ with $\nu(\alpha)=x\in M$, the fibre of $\operatorname{End}(\nu^*E)$ over $\alpha$ equals $\operatorname{End}(E_x)$, so for $\xi\in T_\alpha T^*M$, we can define
  $$
    \tau^G(\alpha)(\xi):=\alpha\circ \nu^{\prime}_{\alpha}(\xi). 
    $$
    By definition, the tautological form $\tau$ is then recovered as $\tau=\tr(\tau^G)$, where again $\tr$ denotes a point-wise trace in the values of the form. Using formula \eqref{q-formula} we readily conclude that for $\alpha\in T^*M$ and $\xi,\eta\in T_\alpha T^*M$ we get 
\begin{equation}\label{eqn:q-Grass}
\boldsymbol{q}(\alpha)(\xi,\eta)=-\tr((\alpha\circ \nu^{\prime}_{\alpha}(\xi))\circ(\alpha\circ \nu^{\prime}_{\alpha}(\eta))), 
\end{equation}
    which is evidently symmetric in $\xi$ and $\eta$. To connect more closely to the other cases, we can write this as $\boldsymbol{q}=-\tr(\tau^G\otimes\tau^G)$, where we agree that the tensor product of one-forms with values in $\operatorname{End}(\nu^*E)$ includes a composition of the values, i.e.\ $(A\otimes B)(\xi,\eta)=A(\xi)\circ B(\eta)$. 

    The description of the Rho tensor is also similar to the projective case, with some complications caused by the matrix multiplications.The Weyl connection $\vartheta_{\sigma}=\sigma^*\theta_0$ determines a torsion-free connection $\nabla$ on $TM$ which is induced by connections on $E$ and $F$ that are compatible with the isomorphism of the top exterior powers. Now the second equation in \eqref{Grass-brackets} readily shows that for $X,Y\in\mathfrak X(M)$ and $\alpha\in\Omega^1(M)$ we get
    \begin{equation}\label{fund-Grass}
      \{X,\alpha\}(Y)=-X\circ (\alpha\circ Y)-Y\circ (\alpha\circ X).
    \end{equation}
    Using this and formula \eqref{partial-Rho} from \cref{sec:Schouten} we conclude that for $X,Y,Z\in\mathfrak X(M)$, $\partial\boldsymbol{\mathsf{P}}^\sigma(X,Y)(Z)$ is given by
$$
X\circ (\boldsymbol{\mathsf{P}}^\sigma(Y)\circ Z)+Z\circ  (\boldsymbol{\mathsf{P}}^\sigma(Y)\circ X) - Y\circ (\boldsymbol{\mathsf{P}}^\sigma(X)\circ Z)-Z\circ  (\boldsymbol{\mathsf{P}}^\sigma(X)\circ Y).
$$
To compute the action of the Ricci type contraction on $Y$ and $Z$, we have to insert the elements of a basis for $X$ and contract (i.e.\ take the trace of the composition) with the dual basis element and sum up over the basis. To write up the result, we need additional notation. We have to view $\boldsymbol{\mathsf{P}}^\sigma(x)$ as an element in $\otimes^2 T^*_xM$ and identifying $T_x^*M$ with $\operatorname{Hom}(F_x,E_x)$ such an element defines a linear map $F_x\otimes F_x\to E_x\otimes E_x$. But for such a map, we can separately swap the input or the output and hence independently symmetrize or alternate in the input and the output. Writing $t_E$ for the twist map on $E\otimes E$ and $t_F$ for the twist map on $F\otimes F$, one verifies that the Ricci type contraction of $\partial\boldsymbol{\mathsf{P}}^\sigma$ can be written as 
$$
(mn+1)\boldsymbol{\mathsf{P}}^\sigma-t_E\circ \boldsymbol{\mathsf{P}}^\sigma - \boldsymbol{\mathsf{P}}^\sigma\circ t_F. 
$$
Observe that this is consistent with the result in the projective case, since for $m=1$, we have $t_E=\text{id}$. By the discussion in \cref{sec:Schouten}, this again has to coincide with the Ricci type contraction $\operatorname{Ric}(\nabla)$ of the curvature of $\nabla$. Now we can compose this equation on both sides with either a symmetrization or an alternation to obtain
\begin{gather*}
  \operatorname{Sym}\circ \operatorname{Ric}(\nabla)\circ \operatorname{Sym}=(mn-1)\operatorname{Sym}\circ \boldsymbol{\mathsf{P}}^\sigma\circ \operatorname{Sym} \\
  \operatorname{Alt}\circ \operatorname{Ric}(\nabla)\circ \operatorname{Alt}=(mn+3)\operatorname{Alt}\circ \boldsymbol{\mathsf{P}}^\sigma\circ \operatorname{Alt}\\
   \operatorname{Sym}\circ \operatorname{Ric}(\nabla)\circ \operatorname{Alt}=(mn+1)\operatorname{Sym}\circ \boldsymbol{\mathsf{P}}^\sigma\circ \operatorname{Alt} \\
  \operatorname{Alt}\circ \operatorname{Ric}(\nabla)\circ \operatorname{Sym}=(mn+1)\operatorname{Alt}\circ \boldsymbol{\mathsf{P}}^\sigma\circ \operatorname{Sym}.
\end{gather*}
From this, one deduces an explicit formula for $\boldsymbol{\mathsf{P}}^\sigma$ as
before and together with formula \eqref{eqn:q-Grass} this provides the
  explicit specialization of \cref{thm:main} to Weyl connections for a
  Grassmannian structure.

\providecommand{\mr}[1]{\href{http://www.ams.org/mathscinet-getitem?mr=#1}{MR~#1}}
\providecommand{\zbl}[1]{\href{http://www.zentralblatt-math.org/zmath/en/search/?q=an:#1}{zbM~#1}}
\providecommand{\arxiv}[1]{\href{http://www.arxiv.org/abs/#1}{arXiv:#1 }}
\providecommand{\doi}[1]{\href{http://dx.doi.org/#1}{DOI~#1}}
\providecommand{\href}[2]{#2}

\end{document}